 \newtheorem{thm}{Theorem}[section]
 \newtheorem{cor}[thm]{Corollary}
 \newtheorem{proposition}[thm]{Proposition}
 \theoremstyle{definition}
 \newtheorem{defn}[thm]{Definition}
 \numberwithin{equation}{section}
\begin{document}

\title[Firm non-expansive mappings in weak metric spaces]
 {Firm non-expansive mappings\\ in weak metric spaces}

%----------Author 1
\author{Armando W. Guti\'{e}rrez}

\address{%
Inria and CMAP, Ecole Polytechnique\\
CNRS, 91128, Palaiseau\\
France}
\email{armando.w.gutierrez@inria.fr}

%----------Author 2
\author{Cormac Walsh}
\address{%
Inria and CMAP, Ecole Polytechnique\\
CNRS, 91128, Palaiseau\\
France}
\email{cormac.walsh@inria.fr}

%----------classification, keywords, date
\subjclass{47H09; 51F99}

\keywords{non-expansive mapping, weak metric, firmly non-expansive, 
firm non-expansive, metric functional}

%%% ----------------------------------------------------------------------

\begin{abstract}
We introduce the notion of firm non-expansive mapping in weak metric spaces,
extending previous work for Banach spaces and certain geodesic spaces.
We prove that, for firm non-expansive mappings, the minimal displacement, 
the linear rate of escape, and the asymptotic step size are all equal.
This generalises a theorem by Reich and Shafrir.
\end{abstract}

%%% ----------------------------------------------------------------------
\maketitle

\section{Introduction}

A fundamental question in the theory of metric spaces is the long term behaviour of iterates 
of non-expansive mappings. Recall that a mapping $T$ of a metric space $(X,\delta)$ into
itself is said to be \emph{non-expansive} if, for every $x,y\in X$,
\[ 
	\delta(Tx,Ty) \leq \delta(x,y). 
\]
Particularly interesting is the case where the mapping has no fixed point, because here the 
iterates have the possibility of escaping to infinity.

In the setting of Banach spaces, Bruck~\cite{B1973} introduced a special class of non-expansive 
mappings, which he called \emph{firmly} non-expansive. These mappings were further studied 
in~\cite[p.41,~p.129]{GR1984} and~\cite{RS1987}. In particular, it was shown in~\cite{RS1987} 
that for firmly non-expansive mappings the minimal displacement, the linear rate of escape, and 
the asymptotic step size are all equal. The definition of firmly non-expansive and the equality of 
these three quantities were extended by Ariza-Ruiz et al.~\cite[Theorem~5.1]{ArLLo2014} to a 
broader class of spaces, namely, the $W$-hyperbolic spaces. These are geodesic metric spaces 
with a certain ``negative curvature''-type condition.

In this paper, we broaden the idea further. We introduce the notion of \emph{firm non-expansive} 
mapping in arbitrary (weak) metric spaces. Our definition does not assume the existence of 
geodesics. This is significant since in modern optimisation applications one often deals with 
discrete spaces or has access to a collection of points of a space whose geometric structure is 
unknown. We show, in Section~\ref{sec:2}, that our class of firm non-expansive mappings contains 
all the firmly non-expansive ones in the setting of Banach spaces or $W$-hyperbolic spaces. 
In Section~\ref{sec:3}, we provide non-trivial examples by characterising the firm non-expansive 
mappings of $1$-dimensional asymmetric normed spaces. 
We then prove in Section~\ref{sec:4} our main result, which is as follows.

\begin{thm}\label{th:1}
Let $T$ be a firm non-expansive mapping of a weak metric space $(X,\delta)$ into itself. 
Then, for every $x\in X$ and every integer $k\geq 1$,
\begin{equation*}
\resizebox{\textwidth}{!}{$\displaystyle \inf_{w \in X} \delta(w,Tw) = \lim_{n\to\infty}\frac{\delta(x,T^{n}x)}{n} 
		= \lim_{n\to\infty}\delta(T^{n}x,T^{n+1}x) 
		= \frac{1}{k}\lim_{n\to\infty}\delta(T^{n}x,T^{n+k}x).$}
\end{equation*}
\end{thm}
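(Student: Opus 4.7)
The plan is to pin each of the four quantities to $d := \inf_{w\in X}\delta(w,Tw)$ by combining three ingredients: plain non-expansiveness, which already forces the relevant sequences to converge and delivers soft upper bounds; firm non-expansiveness, which should supply the sharp inequality $\lim_n \delta(T^nx, T^{n+1}x) \leq d$; and a metric functional argument (signalled by the keyword in the abstract) for the matching lower bound on $\delta(T^nx, T^{n+k}x)$.

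Write $a_n := \delta(T^nx, T^{n+1}x)$ and $b_n := \delta(x, T^nx)$. Non-expansiveness forces $(a_n)$ to be non-increasing, so $s(x) := \lim_n a_n$ exists; the same monotonicity applied to $\delta(T^nx, T^{n+k}x)$ for fixed $k$ produces the limits $L_k$. The sequence $(b_n)$ is subadditive (triangle inequality plus non-expansiveness), so Fekete's lemma yields $\ell(x) := \lim_n b_n/n = \inf_n (b_n/n)$. The obvious inequality $a_n \geq d$ gives $s(x) \geq d$. A Cesàro step from $b_n \leq \sum_{j=0}^{n-1} a_j$ gives $\ell(x) \leq s(x)$, and comparing the orbit of $x$ with that of an arbitrary $w$ via two triangle inequalities yields $\delta(T^nx, T^{n+j}x) \leq 2\,\delta(x,w) + j\,\delta(w,Tw)$, which upon optimising over $w$ delivers the further soft upper bounds $\ell(x) \leq d$ and $L_k \leq k\,d$.

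What remains is to establish the reverse estimates $s(x) \leq d$ and $L_k \geq k\,d$; together with the bounds above they collapse all four quantities onto $d$. The first, $s(x) \leq d$, is where firm non-expansiveness is genuinely used: I expect the definition given in Section~\ref{sec:2} to yield, when $w$ is a point with $\delta(w,Tw)$ close to $d$, a quantitative comparison of $a_n$ to $\delta(w,Tw)$ with an error that vanishes in $n$, so that letting $n\to\infty$ and then $\delta(w,Tw)\downarrow d$ gives $s(x) \leq d$. The second estimate is the natural home for a metric functional: via a Karlsson-type horofunction compactification one selects a functional $h$ that decreases as rapidly as possible along the orbit, so that $h(T^nx) \leq -n d + o(n)$; the triangle-type inequality defining $h$ then upgrades this to $\delta(T^nx, T^{n+k}x) \geq k\,d - o(1)$ as $n\to\infty$, giving $L_k \geq k\,d$.

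The main obstacle is the inequality $s(x) \leq d$. It is the only place where the (possibly asymmetric, non-geodesic) definition of firm non-expansiveness is genuinely exploited, and because geodesics are unavailable one cannot simply mimic the classical proofs in Banach spaces or $W$-hyperbolic spaces; identifying the right ``discrete'' surrogate for the averaging inequality that makes the Banach space argument work is, I expect, the technical heart of Section~\ref{sec:4}. Throughout, since $\delta$ is only a weak metric, care is needed to track the order of its arguments and not silently invoke symmetry.
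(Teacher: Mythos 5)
Your reduction to two ``hard'' estimates is where the proposal breaks down, because neither of the mechanisms you propose for them works, and together they are essentially the whole theorem. For $s(x)\le d$: applying the firmness inequality (\ref{eq:C}) to a pair $(T^nx,w)$ with $w$ a near-minimiser of the displacement gives nothing usable, since the terms $\delta(T^nx,w)$, $\delta(T^nx,Tw)$, $\delta(T^{n+1}x,w)$ may all be huge and of the same order, and no cancellation occurs; there is no ``error that vanishes in $n$''. The paper never compares the orbit with an external near-minimiser at all. Instead it applies (\ref{eq:C}) to pairs of orbit points $(T^nx,T^{n+k}x)$, solves for the cross term $\delta(T^{n+1}x,T^{n+k+1}x)$ (this is where $\inf t>0$ in (\ref{eq:A}) and the budget (\ref{eq:B}) enter, via the bounds $A_{n,k}\ge 2k$ and $B_{n,k}\le(1+2k)/\alpha$), and proves by induction the identity $\sigma_k(x,T)=k\,\sigma_1(x,T)$. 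From this, $\sigma_1\le\delta(x,T^mx)/m\to\rho(T)$, and the chain closes with the soft inequalities $\rho\le\overline\rho\le\sigma_1$ ((\ref{eq:ineq2}) and \cite[Lemma~12]{GV2012}); the inequality $s(x)\le d$ is obtained only at the end, never directly.

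Your second hard estimate, $L_k\ge kd$ via metric functionals, is also unsound. Karlsson's theorem gives $h(T^nx_0)\le-\rho(T)\,n$ with the \emph{escape rate} $\rho$, not the minimal displacement $d=\overline\rho$; since $\rho<\overline\rho$ is possible for general non-expansive maps (\cite[Example~26]{GV2012}), getting $-nd$ there presupposes part of what is to be proven. Even for $L_k\ge k\rho$ the argument has a gap: $h(T^nx_0)\le-\rho n$ together with $h(T^nx_0)/n\to-\rho$ only controls errors of size $o(n)$, which for fixed $k$ does not yield $h(T^nx_0)-h(T^{n+k}x_0)\ge k\rho-o(1)$, not even along a subsequence. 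In addition, the horofunction machinery of Section~\ref{sec:5} is set up for genuine metrics, whereas Theorem~\ref{th:1} is stated for weak metrics; the paper's proof avoids it entirely (once $\sigma_k=k\sigma_1$ is known, the lower bound comes for free from $\sigma_1\ge\overline\rho$). Two smaller points: your ``soft'' bound $L_k\le kd$ is not soft --- at fixed $k$ the additive term $2\delta(x,w)$ does not vanish when you optimise over $w$, and indeed $L_k\le kd$ fails for general non-expansive maps, so it belongs to the hard part (only $L_k\le ks(x)$, i.e.\ (\ref{eq:ineq1}), and $\ell\le d$ are soft); and in a weak metric the comparison constant should be $\delta(x,w)+\delta(w,x)$ rather than $2\delta(x,w)$.
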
	

The first three quantities in this equation are, respectively, the minimal displacement, the linear rate 
of escape, and the asymptotic step size, mentioned earlier. Our theorem thus generalises the results 
by Reich and Shafrir and by Ariza-Ruiz et al., referred to above.

We conclude this note by proving in Section~\ref{sec:5} a couple of corollaries of our main result, 
concerning the behaviour of the iterates of firm non-expansive mappings in terms of \emph{metric functionals}.

The first enhances \cite[Theorem~16]{GV2012} for firm non-expansive mappings in arbitrary metric 
spaces when the linear rate of escape, denoted by $\rho(T)$, equals zero (compare also 
with \cite[Proposition~3]{GK2021}).

\begin{cor}\label{cor:2}
Let $T$ be a firm non-expansive mapping of a metric space $(X,\delta)$ into itself 
such that $\rho(T)=0$. Then, there exists a metric functional $h$ on $X$ such that, 
for every $x\in X$,
\begin{equation*} 
	h(Tx) \leq h(x).
\end{equation*}
\end{cor}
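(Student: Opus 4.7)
My plan is to exhibit $h$ as a limit point of a sequence of normalised distance functions along an orbit of $T$, which is the standard construction of a metric functional. Fix a basepoint $x_0 \in X$ and, for each integer $n \ge 0$, set
\[
h_n(x) := \delta(x, T^n x_0) - \delta(x_0, T^n x_0).
\]
Each $h_n$ is $1$-Lipschitz, satisfies $h_n(x_0) = 0$, and obeys $|h_n(x)| \le \delta(x_0, x)$. Hence the sequence $\{h_n\}$ lies in the compact subset $\prod_{x\in X}[-\delta(x_0,x),\delta(x_0,x)]$ of $\mathbb{R}^X$ with the topology of pointwise convergence. By Tychonoff I can extract a pointwise-convergent subnet $(h_{n_\alpha})$ whose limit I call $h$; by construction $h$ lies in the metric functional compactification of $X$.

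To verify $h(Tx) \le h(x)$ I compute, for each $n \ge 1$,
\[
h_n(Tx) - h_n(x) = \delta(Tx, T^n x_0) - \delta(x, T^n x_0).
\]
Non-expansivity of $T$ gives $\delta(Tx, T^n x_0) \le \delta(x, T^{n-1} x_0)$, and the triangle inequality gives $\delta(x, T^{n-1} x_0) - \delta(x, T^n x_0) \le \delta(T^{n-1} x_0, T^n x_0)$, so
\[
h_n(Tx) - h_n(x) \le \delta(T^{n-1} x_0, T^n x_0).
\]
The crucial input now is Theorem~\ref{th:1}: the hypothesis $\rho(T)=0$ together with the equality of the linear rate of escape and the asymptotic step size forces $\delta(T^{n-1}x_0, T^n x_0) \to 0$. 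Passing to the limit along the subnet then produces $h(Tx) - h(x) \le 0$, as required.

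The main obstacle, in my view, is not in the inequality-chasing (which is essentially triangle inequality plus one application of Theorem~\ref{th:1}) but in confirming that the limit object is of the right type: that the pointwise cluster point really is a metric functional in the paper's sense and that the subnet extraction works without any separability assumption on $X$. Once the construction is set up, the hypothesis $\rho(T)=0$ feeds directly through the main theorem to yield the desired monotonicity.
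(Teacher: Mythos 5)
Your argument is correct and is essentially the paper's own: you use the same functionals $h_n=\Phi(T^nx_0)$, the same estimate $h_n(Tx)-h_n(x)\le\delta(T^{n-1}x_0,T^nx_0)$ obtained from non-expansiveness plus the triangle inequality, and Theorem~\ref{th:1} with $\rho(T)=0$ to make the step sizes tend to zero. The only divergence is the compactness bookkeeping: the paper first treats the separable case by extracting a convergent subsequence of $(\Phi(T^nx_0))$ and then, because $\overline{\Phi(X)}$ need not be sequentially compact in general, switches to the nested compact sets $W_n=\{h\in\overline{\Phi(X)}: h(Tx)\le h(x)+\delta(T^nx_0,T^{n+1}x_0)\ \text{for all } x\}$ and the finite intersection property; your single subnet extraction from the compact product space does the same job uniformly, with no separability assumption, since any cluster point of a net in $\Phi(X)$ lies in $\overline{\Phi(X)}$ and the error terms $\delta(T^{n_\alpha-1}x_0,T^{n_\alpha}x_0)$ still converge to zero along the subnet. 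Both routes are equally rigorous; yours is slightly more streamlined, the paper's avoids nets altogether.
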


We point out that the metric functional $h$ in Corollary~\ref{cor:2} is defined in terms of the 
iterates $T^{n}x_{0}$ for some $x_{0}\in X$, whereas the construction of the metric functional 
appearing in \cite[Theorem~16]{GV2012} relies strongly on the non-positive curvature condition 
assumed there.

A theorem by Lins~\cite[Theorem~2.1]{L2009} states that, for every fixed-point-free non-expansive 
mapping in a finite-dimensional normed space, there is a metric functional along which the orbits are 
seen to escape to infinity. It is not known in general whether this behaviour of escaping to infinity is 
monotone. We use Corollary~\ref{cor:2} to show that, in the case of firm non-expansive mappings, it is.

\begin{cor}\label{cor:3}
Let $T$ be a firm non-expansive mapping of a finite-dimensional normed space $(V,\| \cdot \|)$ 
into itself. Suppose that $\rho(T)=0$. Then, either $T$ has bounded orbits, or there is a metric 
functional $h$ on $V$ such that, for every $x\in V$, the sequence $(h(T^{n}x))_n$ converges 
monotonically to $-\infty$. Moreover, the metric functional $h$ is a limit point of the orbit 
$(T^{n}0)_{n \geq 0}$.
\end{cor}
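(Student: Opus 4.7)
The plan is to combine Corollary~\ref{cor:2} with a direction argument in the horofunction compactification of $V$. If the orbit $(T^n 0)_{n\ge 0}$ is bounded we are in the first alternative of the statement, so assume it is unbounded and pass to a subsequence $(n_k)$ along which $\|T^{n_k} 0\|\to\infty$. Apply Corollary~\ref{cor:2} (whose hypothesis $\rho(T)=0$ is our assumption) to produce a metric functional $h$ with $h(Tx)\le h(x)$; the construction realises $h$ as the pointwise limit of $h_m(x):=\|x-T^m 0\|-\|T^m 0\|$ along some subsequence, so $h$ is automatically a limit point of $(T^n 0)$ in the horofunction compactification, settling the ``moreover'' clause. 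Using compactness of this compactification in finite dimension, refine the subsequence so that $h_{n_k}\to h$ pointwise and $\|T^{n_k} 0\|\to\infty$ simultaneously. The monotone sequence $(h(T^n 0))_n$ then converges to some $L\in[-\infty,0]$, and the task reduces to showing $L=-\infty$.

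For this I would establish the following general statement: \emph{in a finite-dimensional normed space, whenever $(z_k)$ is a sequence with $\|z_k\|\to\infty$ along which $h_{z_k}\to h$ pointwise, one has $h(z_k)\to -\infty$}. Applied with $z_k=T^{n_k} 0$, this gives $h(T^{n_k} 0)\to -\infty$, and monotonicity extends the conclusion to $h(T^n 0)\to -\infty$ along the full sequence. The extension to arbitrary $x\in V$ is then immediate: the $1$-Lipschitz property of $h$ together with non-expansiveness of $T$ yield $|h(T^n x)-h(T^n 0)|\le \|T^n x-T^n 0\|\le \|x\|$, so $h(T^n x)\to -\infty$ as well, with monotonicity inherited directly from $h(Ty)\le h(y)$.

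The main obstacle is the general statement above. The strategy is to pass to a further subsequence along which the normalised sequence $T^{n_k} 0/\|T^{n_k} 0\|$ converges to some unit vector $\xi\in V$, then write $T^{n_k} 0=\|T^{n_k} 0\|\xi+r_k$ with $\|r_k\|=o(\|T^{n_k} 0\|)$ and estimate the defining limit $\lim_{j\to\infty}[\|T^{n_k} 0-T^{n_j} 0\|-\|T^{n_j} 0\|]$ via triangle-inequality bounds; using the analogous direction decomposition for $T^{n_j} 0$ as $j\to\infty$ leads to $h(T^{n_k} 0)=-\|T^{n_k} 0\|+o(\|T^{n_k} 0\|)$. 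The subtle point is controlling the remainder terms uniformly enough for the estimate to go through in the limit, which is where the finite-dimensionality of $V$ is essential: it keeps the relevant directions and dual-norming functionals inside compact sets, pinning down the ``direction'' associated with the limit horofunction and forcing the requisite sublinear correction.
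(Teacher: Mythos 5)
Your route differs from the paper's: the paper handles the unbounded case by citing Lins \cite[p.~2390]{L2009}, which already supplies a subsequential limit $h$ of the orbit $(T^n0)$ with $h(T^nx)\to-\infty$ for every $x$, and only adds monotonicity via Corollary~\ref{cor:2}; you instead prove the escape to $-\infty$ yourself via the lemma ``$\|z_k\|\to\infty$ and $h_{z_k}\to h$ pointwise imply $h(z_k)\to-\infty$'', and then upgrade with monotonicity and $|h(T^nx)-h(T^n0)|\le\|x\|$. That architecture is viable and the lemma is true, but note first an ordering problem in your setup: you cannot ``refine'' the subsequence furnished by Corollary~\ref{cor:2} so that the norms also blow up, because that subsequence could perfectly well be a bounded portion of the unbounded orbit (in which case $h$ is an internal point $h_w$ and no refinement has divergent norms). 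You must start from a subsequence with $\|T^{n_k}0\|\to\infty$, extract pointwise convergence of $\Phi(T^{n_k}0)$ by compactness, and then observe that the computation in the \emph{proof} of Corollary~\ref{cor:2} (not merely its statement) applies to this particular limit, since $\delta(T^n0,T^{n+1}0)\to\rho(T)=0$ along the whole sequence by Theorem~\ref{th:1}; this is also how the paper itself combines Corollary~\ref{cor:2} with Lins's functional.

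The genuine gap is in your sketch of the lemma. Decomposing both points about the limiting direction, $z_k=\|z_k\|\xi+r_k$ and $z_j=\|z_j\|\xi+r_j$, and applying the triangle inequality only gives $h_{z_j}(z_k)=\|z_k-z_j\|-\|z_j\|\le-\|z_k\|+\|r_k\|+\|r_j\|$, and $\|r_j\|$ is merely $o(\|z_j\|)$, in general unbounded in $j$; so the inner limit over $j$ does not produce $-\|z_k\|+o(\|z_k\|)$, and the appeal to compactness of directions and dual-norming functionals does not by itself restore the needed uniformity. The estimate closes if you instead compare $z_k$ with the point $\|z_k\|u_j$ on the segment $[0,z_j]$, where $u_j:=z_j/\|z_j\|$: for $\|z_j\|\ge\|z_k\|$,
\begin{equation*}
\|z_k-z_j\|-\|z_j\|\;\le\;\bigl\|z_k-\|z_k\|u_j\bigr\|+\bigl(\|z_j\|-\|z_k\|\bigr)-\|z_j\|\;=\;\|z_k\|\,\|u_k-u_j\|-\|z_k\|,
\end{equation*}
so letting $j\to\infty$ along a subsequence with $u_j\to\xi$ yields $h(z_k)\le-\|z_k\|\bigl(1-\|u_k-\xi\|\bigr)$, which tends to $-\infty$ along that same subsequence; this suffices, since monotonicity then forces $h(T^n0)\to-\infty$ along the full sequence. (Equivalently, one can first show $h(t\xi)\le-t$ for all $t\ge0$ and use that $h$ is $1$-Lipschitz.) With this repair your argument is a correct, self-contained alternative to the citation of \cite{L2009}, at the price of using firmness (through monotonicity) to pass from the subsequence to all $n$ and all $x$, which Lins's theorem does not require.
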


\section{Firm non-expansive mappings}\label{sec:2}

A \emph{weak metric} on a set $X$ is a mapping $\delta \colon X \times X \to [0, +\infty[$ 
satisfying $\delta(x, x) = 0$, for every $x\in X$, and the triangle inequality 
\[\delta(x,z) \leq \delta(x,y) + \delta(y,z),\] 
for every $x,y,z \in X$. Examples of weak metrics include the \emph{Funk weak metric}, 
the \emph{Apollonian weak metric}, and \emph{Thurston's metric}; see the papers \cite{PT2007,PT2009} 
by Papadopoulos and Troyanov for more details.
Weak metrics are sometimes called \emph{quasi-pseudometrics}.

Motivated by \'{C}iri\'{c}'s work \cite{C1971} on a generalisation of Banach's contraction principle, 
we introduce the notion of firm non-expansive mapping in an arbitrary weak metric space.

\begin{defn}
A mapping $T$ of a weak metric space $(X,\delta)$ into itself is \emph{firm} if there exist 
mappings $q$, $r$, $s$, and $t$ of $X \times X$ into the interval $[0,+\infty[$ satisfying
the following three conditions:

\begin{equation}\label{eq:A}
	\inf_{(x,y) \in X \times X} t(x,y) \;>\;0;
\end{equation}
\begin{equation}\label{eq:B}
	\sup_{(x,y) \in X \times X} q(x,y) + r(x,y) + s(x,y) + 2t(x,y) \;\leq\;1;
\end{equation}
and, for every $x,y\in X$, 
\begin{equation}\label{eq:C}
	\begin{split}
		\delta(Tx,Ty) &\leq  q(x,y)\delta(x,y) 
			+ r(x,y)\delta(x,Tx)
			+ s(x,y)\delta(y,Ty) \\
			&\quad +\,t(x,y)\Big[\delta(x,Ty)+\delta(Tx,y)\Big].
	\end{split}
\end{equation} 	

We say that $T$ is \emph{firm non-expansive} if it is both firm and non-expansive.
\end{defn}

For purposes of comparison, we recall Bruck's notion of firmly non-expansive mapping. A mapping $T$ 
of a Banach space $(V,\|\cdot\|)$ into itself is said to be firmly non-expansive if, for every $x,y\in V$ and 
every $0< \lambda \leq 1$, 
\begin{equation}\label{eq:fnenorm}
	\|Tx-Ty\| \leq \|(1-\lambda)(Tx-Ty)+\lambda(x-y)\|.
\end{equation}	
A simple example of a firm non-expansive mapping that is not firmly non-expansive is the mapping 
$x\mapsto Tx=\left| x \right| +1$ on the real line endowed with its usual metric. Indeed, for $x<0$ and $y=0$, 
there is no $\lambda\in]0,1[$ such that (\ref{eq:fnenorm}) holds. One can readily verify that this mapping 
is firm non-expansive. Another example of a firm non-expansive mapping on the real line is
\[
	Tx = \begin{cases}
			-x+1 & \mbox{if } x < 0, \\
			x+e^{-x} & \mbox{if } x \geq 0.
		\end{cases}	
\] 
By setting $q(x,y)=r(x,y)=s(x,y)=0$ and $t(x,y)=1/2$ for every $x,y$, we see that $T$ is firm non-expansive.  

Every firmly non-expansive mapping in a Banach space is firm non-expansive. Indeed, if $T$ satisfies 
inequality~(\ref{eq:fnenorm}) with $\lambda\in]0,1[$, then, for every $x,y\in V$,
\[
	\| Tx-Ty \| \leq \frac{\lambda}{2-\lambda}\| x-y \| 
			+ \frac{1-\lambda}{2-\lambda}\left( \| x-Ty \| + \| Tx-y \| \right).
\]
Our claim follows from this by fixing $\lambda \in ]0, 1[$ and setting
\begin{align*}
	q(x,y)=\frac {\lambda} {2-\lambda}, \qquad
	r(x,y)=0, \qquad
	s(x,y)=0, \qquad \text{and} \quad
	t(x,y)=\frac{1-\lambda}{2-\lambda},
\end{align*}
for every $x,y\in V$. More generally, every firmly non-expansive mapping in a $W$-hyperbolic space is 
firm non-expansive. This follows immediately from \cite[Lemma~5.6]{ArLLo2014}.

Our definition of firm non-expansive is inspired by \'{C}iri\'{c}'s definition \cite{C1971} of a 
\emph{generalised contraction}. This is a mapping $T$ from a metric space $(X, \delta)$ to itself such 
that there exist mappings $q$, $r$, $s$, and $t$ from $X \times X$ to $[0, +\infty[$ such that (\ref{eq:C}) 
above holds, and the supremum in (\ref{eq:B}) is strictly less than $1$. Note that when this is the case, 
it is possible to slightly increase $t$ everywhere so that (\ref{eq:A}) also holds. The relation between 
generalised contractions and our firm non-expansive mappings may be considered similar to the relation 
between strict contractions and non-expansive mappings. \'{C}iri\'{c} showed that generalised contractions 
satisfy the conclusion of Banach's Contraction Theorem.
 
Other authors have focused on the existence of a unique fixed point
of mappings that satisfy conditions similar to those considered
in our definition of firm mapping; see~\cite{GKS1973,HR1973,R1971,R1977,W1974}.
In contrast, we are interested in the asymptotic behaviour
of fixed-point-free non-expansive mappings.

The definition of firmly non-expansive mappings of Ariza-Ruiz et al.\ was studied in the context of CAT(0) 
spaces in~\cite{LNS2018}. For a different approach to generalising firmly non-expansive mappings, 
see~\cite{BLL2021}.

\newcommand\R{\mathbb{R}}

\section{An example: asymmetric norms on $\R$.}\label{sec:3}

The following characterisation of firmness for
non-expansive mappings will prove useful in studying our example.
Let $T$ be a non-expansive self-mapping of a weak metric space $(X, \delta)$.
For each $x, y \in X$, define
\begin{align*}
M(x, y) &:= \max\Big[\delta(x, y), \delta(x, Tx), \delta(y, Ty)\Big] \\
A(x, y) &:= \frac {1} {2} \Big(\delta(x, Ty) + \delta(Tx, y)\Big) \\
\text{and} \qquad
\tau(x, y) &:= \frac {M(x, y) - \delta(Tx, Ty)} {2 \big(M(x, y) - A(x, y)\big)}.
\end{align*}
Then, $T$ is firm if and only if
\begin{align*}
\inf \Big\{ \tau(x, y)
    \mid \text{$x, y\in X$,  $A(x, y) < \delta(Tx , Ty)$}\Big\} > 0.
\end{align*}

\subsection{Asymmetric norms on $\R$.}

We consider $\R$ endowed with an asymmetric norm
\begin{align*}
||x|| := \max\big(-\alpha x, \beta x\big),
\end{align*}
where $\alpha$ and $\beta$ are positive real numbers.

For each distinct $x$ and $y$ in $\R$, define $z_{xy}$ to be the unique
element of $\R$ such that $||z_{xy} - y|| = ||y - x||$
and $(y - x)(z_{xy} - y) < 0$.
If $x$ and $y$ are points of $\R$, and $T$ is a mapping such that $Tx = y$ and
$Ty = z_{xy}$, then one can calculate that $\tau(x,y) = 0$
in the criterion above with $\delta(x,y):=||y-x||$, and hence the mapping $T$ is not firm.

The following proposition shows that a non-expansive self-mapping of $(\R, ||\cdot||)$
fails to be firm precisely when one can find points arbitrarily close to this
configuration.

\begin{proposition}
Let $(\R, ||\cdot||)$ be the asymmetric normed space above.
Then, a non-expansive mapping $T \colon \R \to \R$ is not firm
if and only if, for every $\epsilon >0$ there exist distinct points
$x$ and $y$ in $\R$ such that $||Tx - y||$ and $||Ty - z_{xy}||$
are both less than $||y - x||\epsilon$.
\end{proposition}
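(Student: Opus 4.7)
The plan is to apply the firmness criterion stated earlier in this section, which reduces the question to whether the infimum of $\tau(x,y)$ over the admissible set $\{(x,y) : A(x,y) < \delta(Tx, Ty)\}$ is strictly positive. The first reduction uses translation invariance of $\delta$ and positive homogeneity of the asymmetric norm: since $\tau$ depends only on pairwise norm-differences of the four points $x, y, Tx, Ty$, it is unchanged if one translates so that $x = 0$ and then rescales by $1/(y-x)$ (when $y > x$). In these normalized coordinates, $y = 1$, $||y - x|| = \beta$, the point $z_{xy}$ becomes $1 - \beta/\alpha$, and the ``bad configuration'' $Tx = y$, $Ty = z_{xy}$ becomes $(U, V) = (1, 1 - \beta/\alpha)$, where $U := (Tx - x)/(y - x)$ and $V := (Ty - x)/(y - x)$; a direct calculation gives $\tau = 0$ and $A < \delta$ there. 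The case $y < x$ is handled symmetrically.

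For the implication $(\Leftarrow)$, pick pairs $(x_n, y_n)$ with $||Tx_n - y_n||$ and $||Ty_n - z_{x_n y_n}||$ both less than $||y_n - x_n||/n$; their normalizations satisfy $(U_n, V_n) \to (1, 1 - \beta/\alpha)$. Continuity of the asymmetric norm then gives $M_n \to \beta$, $\delta(Tx_n, Ty_n) \to \beta$, and $A_n \to A^*$ in the normalized coordinates, where an explicit calculation shows $A^* < \beta$ at the bad configuration. Hence eventually $A_n < \delta(Tx_n, Ty_n)$, the pair is admissible, and $\tau(x_n, y_n) \to 0$, which by the criterion shows $T$ is not firm.

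For the converse, the criterion furnishes a sequence $(x_n, y_n)$ in the admissible set with $\tau(x_n, y_n) \to 0$. After restricting to a subsequence where $y_n > x_n$ (arguing symmetrically in the opposite case) and normalizing, we obtain $(U_n, V_n)$ in the normalized admissible set. I first show this set is bounded: combining $||V - U|| \leq \beta$ (non-expansiveness) with $A < \delta \leq \beta$ gives $||V|| + ||1 - U|| < 2\beta$, which bounds $U$ and $V$ explicitly. Extract a convergent subsequence $(U_n, V_n) \to (U^*, V^*)$; by continuity $\tau(U^*, V^*) = 0$ and $A(U^*, V^*) \leq \delta(U^*, V^*)$. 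The condition $\tau = 0$ together with non-expansiveness forces $||V^* - U^*|| = \beta$ (saturation), and a direct enumeration shows that the zero set of $\tau$ consists of the bad configuration $(1, 1 - \beta/\alpha)$ together with a ``Case A'' segment $V = U + 1$, $U \in [-\beta/\alpha, 1]$. On this segment one computes $A \geq \beta = \delta$, and a local expansion at each of its points reveals that strict admissibility ($A < \delta$) and non-expansiveness ($||V - U|| \leq \beta$) impose opposite strict inequalities on the same first-order perturbation and therefore cannot hold simultaneously near any Case A point. Thus $(U^*, V^*)$ must be the bad configuration, and translating this convergence back to the original pairs yields the $\epsilon$-proximity condition.

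The main obstacle is the final step of the reverse direction: identifying the full zero set of $\tau$ and ruling out every point of Case A as a possible limit of admissible pairs. This forces a case split driven by the signs of $U - 1$, $V - U$, and the relative positions of $U$ and $V$ with respect to the origin, together with careful book-keeping of the one-sided nature of the asymmetric norm.
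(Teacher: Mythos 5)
Your proposal is sound and follows the same overall skeleton as the paper's proof: both directions go through the stated criterion, the forward direction builds admissible pairs with $\tau\to 0$ from the near-bad configurations, and the reverse direction rescales (your $(U,V)=(Tx,Ty)$ in the coordinates $x=0$, $y=1$ is equivalent to the paper's displacement ratios $u_n,v_n$), extracts a limit point, and identifies it with the bad configuration. The genuine difference is how the spurious branch is eliminated. The paper observes, \emph{before} passing to the limit, that $A_n<\delta_n$ together with non-expansiveness and the bound $\|w\|\ge\beta w$ forces $Ty_n<Tx_n$; this kills your ``Case A'' line $V=U+1$ in two lines, and the limit is then pinned down by an extremality argument ($v-u=-1/\alpha-1/\beta$ with $u\le 1/\beta$, $v\ge-1/\alpha$). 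You instead enumerate the full set $\{M=\delta\}$ under non-expansiveness (your enumeration is correct: the branch $V=U-\beta/\alpha$ meets $\|U\|\le\beta$, $\|V-1\|\le\beta$ only at $(1,1-\beta/\alpha)$) and exclude the segment $V=U+1$, $U\in[-\beta/\alpha,1]$ by a local perturbation analysis; I checked that this works, including at the endpoints and at the sign-change points $V=0$, $U=1$ (generically the two constraints literally become $V-U>1$ versus $V-U\le 1$), but it is considerably more laborious than the paper's sign observation and the non-generic cases do require the careful bookkeeping you flag. Two small points of precision: ``$\tau(U^*,V^*)=0$ by continuity'' is not quite right, since $\tau$ may be of indeterminate form $0/0$ at the limit when $M^*=A^*$; what you actually need, and what your boundedness estimate delivers, is $M_n-\delta_n=2\tau_n(M_n-A_n)\to 0$, hence $M^*=\delta^*=\beta$. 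Also, in the forward direction the pairs supplied by the hypothesis may have $x>y$, so the same order reduction you invoke in the reverse direction is needed there too.
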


In particular, we see that every fixed-point-free non-expansive mapping
on $(\R, ||\cdot||)$ is firm. This shows that firm non-expansive mappings
are considerably more general than firmly non-expansive mappings,
even in dimension one. Recall that, on $\R$, the latter mappings are precisely
the $1$-Lipschitz mappings that are non-decreasing.

\begin{proof}
Assume first that $T$ satisfies the condition.
So, we can find sequences of points $x_n$ and $y_n$ in $\R$ such that
$||Tx_n - y_n|| / d_n$ and $||Ty_n - z_{x_n, y_n}|| / d_n$
converge to zero, where $d_n := ||y_n - x_n||$, for all $n$.
It follows that $M_n / d_n$ and $\delta_n / d_n$ both
converge to $1$, where $M_n := M(x_n, y_n)$ and $\delta_n := ||Ty_n - Tx_n||$,
for all $n$. Define $A_n := A(x_n, y_n)$, for all $n$.
Since $||z_{x_n, y_n} - x_n|| \le ||y_n - x_n||$, for all $n$,
we have
\begin{align*}
\limsup_n \frac {A_n} {d_n} \le \frac {1} {2}.
\end{align*}
This gives $A_n < \delta_n$ for $n$ large enough,
and that $\tau(x_n, y_n)$ converges to zero.
By the criterion at the start of this section, $T$ is not firm.

Now assume that $T$ is not firm.
So, there exists sequences of points $x_n$ and $y_n$ in $\R$ such that
$A_n < \delta_n$ for all $n$, and $\tau_n$ converges to zero,
where we have written $A_n := A(x_n, y_n)$ and $\delta_n := ||Ty_n - Tx_n||$,
and $\tau_n := \tau(x_n, y_n)$.
We assume without loss of generality that $x_n < y_n$, for all $n$. If there
is a subsequence where the opposite inequality is true, it can be handled in
a similar manner.
Note that, for any $n$,
\begin{align*}
A_n \ge  \frac {1} {2} \beta \big(Ty_n - x_n + y_n - Tx_n \big).
\end{align*}
If $Ty_n \ge Tx_n$, then the non-expansiveness of $T$ would imply that
the quantity on the right-hand-side is greater than or equal to $\delta_n$,
which is not the case.
We conclude that $Ty_n < Tx_n$, for all $n$.

For each $n$,
\begin{align*}
\tau_n \ge \frac {M_n - \delta_n} {2M_n}
       =   \frac {1} {2} \Big(1 - \frac {\delta_n} {M_n} \Big)
\qquad\text{and}\quad
\delta_n \le ||y_n - x_n|| \le M_n,
\end{align*}
where we have defined $M_n := M(x_n, y_n)$.
We deduce that the ratios
\begin{align*}
 \delta_n / ||y_n - x_n|| 
 \qquad\text{and}\quad 
 M_n / ||y_n - x_n||
\end{align*}
both converge to $1$, as $n$ tends to infinity.

It follows from the convergence of the latter ratio that $(u_n, v_n)$
stays within a bounded region of the plane, where
\begin{align*}
u_n := \frac {Tx_n - x_n} {||y_n - x_n||}
\qquad\text{and}\quad
v_n := \frac {Ty_n - y_n} {||y_n - x_n||}.
\end{align*}
Let $(u, v)$ be a limit point of this sequence.
Again from the convergence of the ratio, both $u$ and $v$
lie in the interval $[-1/\alpha, 1/\beta]$.
Since, for each $n$,
\begin{align*}
\frac {\delta_n} {||y_n - x_n||}
    = \frac {||Ty_n - Tx_n||} {||y_n - x_n||}
    &= -\alpha \Big(v_n - u_n + \frac {y_n - x_n} {||y_n - x_n||} \Big) \\
    &= -\alpha \Big(v_n - u_n + \frac {1} {\beta} \Big),
\end{align*}
we get that $v - u = -1/\alpha - 1/\beta$.
We deduce that $v = -1/\alpha$ and $u = 1/\beta$.
Hence $||u_n||$ and $||v_n||$ both converge to $1$.
It follows that
\begin{align*}
 ||Tx_n - y_n|| / ||y_n - x_n||
 \qquad\text{and}\quad 
 ||Ty_n - z_{x_n,y_n}|| / ||y_n - x_n||
\end{align*}
both converge to zero, as $n$ tends to infinity.
\end{proof}

\section{Minimal displacement, linear rate of escape, and asymptotic step size}\label{sec:4}

Let $T$ be a non-expansive mapping of a weak metric space $(X,\delta)$ into itself. We first recall the 
relations among the minimal displacement, 
\[
	\overline{\rho}(T) := \inf_{w \in X} \delta(w,Tw),
\] 
the (linear) escape rate of the orbits of $T$,
\[
	\rho(T) := \lim_{n\to\infty}\frac{\delta(x,T^{n}x)}{n},
\]
and the asymptotic step size of the orbit $(T^{n}x)_{n\geq 0}$,
\[
	\sigma_{1}(x,T) := \lim_{n\to\infty}\delta(T^{n}x,T^{n+1}x).
\]	

Non-expansiveness and the triangle inequality imply that the linear rate of escape $\rho(T) $ is well defined and 
does not depend on $x$; see the comment after \cite[Definition~11]{GV2012}. 
Moreover, the following inequality holds (see \cite[Lemma~12]{GV2012}): 
\[
	\overline{\rho}(T) \geq \rho(T). 
\]
In general, this inequality may be strict, as shown for example in~{\cite[Example~26]{GV2012}}.
Kohlberg and Neyman \cite[Theorem~1.1]{KN1981} proved that $\rho(T) = \overline{\rho}(T)$ when $(X,\delta)$ 
is a Banach space. Gaubert and Vigeral \cite[Theorem~1]{GV2012} showed that this equality also holds 
in a larger class of geodesic metric spaces, namely when $(X,\delta)$ is a complete \emph{metrically star-shaped} 
space \cite[Definition~5]{GV2012}. 

Another consequence of non-expansiveness is that, for every $x\in X$ and every integer $k\geq 1$, 
the following limit exists: 
\begin{equation}\label{eq:sigmak}
	\sigma_{k}(x,T) := \lim_{n\to\infty}\delta(T^{n}x,T^{n+k}x).
\end{equation}
Unlike the escape rate, this quantity may depend on $x$. 

By using again the triangle inequality and non-expansiveness, one can verify that 
\begin{equation}\label{eq:ineq1}
	\frac{\sigma_{k}(x,T)}{k} \leq \sigma_{1}(x,T),
	\qquad\text{for $k\ge 1$},
\end{equation}
and 
\begin{equation}\label{eq:ineq2}
	\sigma_{1}(x,T) \geq \overline{\rho}(T).
\end{equation}

Our main result is that for firm non-expansive mappings the minimal displacement, the escape rate, 
and the asymptotic step size are equal.

\begin{thm}
Let $T$ be a firm non-expansive mapping of a weak metric space $(X,\delta)$ into itself. Then, for 
every $x\in X$ and every integer $k\geq 1$,
\[
	\overline{\rho}(T) = \rho(T) = \sigma_{1}(x,T) = \frac {\sigma_{k}(x,T)} {k}.
\]
\end{thm}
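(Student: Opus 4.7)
The plan is to combine the chain $\rho(T) \leq \overline{\rho}(T) \leq \sigma_1(x,T)$, already recorded via (\ref{eq:ineq2}) in the preamble of Section~\ref{sec:4}, together with $\sigma_k(x,T)/k \leq \sigma_1(x,T)$ from (\ref{eq:ineq1}), and then close the loop by establishing the reverse estimate $\sigma_1(x,T) \leq \rho(T)$. The single new ingredient will be the identity
\[
\sigma_k(x,T) = k\, \sigma_1(x,T) \qquad\text{for every } k \geq 1.
\]
Once this identity is in hand, non-expansiveness of the iterate $T^m$ applied inside the definition (\ref{eq:sigmak}) gives $\sigma_k(x,T) \leq \delta(x, T^k x)$, whence $\sigma_1(x,T) = \sigma_k(x,T)/k \leq \delta(x, T^k x)/k$; letting $k \to \infty$ on the right produces $\sigma_1(x,T) \leq \rho(T)$, which collapses the whole chain into the stated equalities.

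I would prove the identity by induction on $k$, the case $k = 1$ being tautological. For the inductive step, assume $\sigma_j(x,T) = j\, \sigma_1(x,T)$ for every $j \leq k$ (with the convention $\sigma_0 = 0$), and apply (\ref{eq:C}) at the pair $(T^m x, T^{m+k} x)$; write $q_m, r_m, s_m, t_m$ for the firmness coefficients evaluated at this pair, and let $t_0 > 0$ be the constant furnished by (\ref{eq:A}). Since these four sequences are bounded, one can successively extract a subsequence $(m_j)$ along which all four converge to limits $q^{\ast}, r^{\ast}, s^{\ast}, t^{\ast}$ that inherit the bounds $t^{\ast} \geq t_0 > 0$ and $1 - q^{\ast} - r^{\ast} - s^{\ast} \geq 2 t^{\ast}$ from (\ref{eq:A}) and (\ref{eq:B}). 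Passing to the limit along $(m_j)$ in (\ref{eq:C}) and substituting the induction hypothesis yields, with $\sigma := \sigma_1(x,T)$,
\[
k \sigma \;\leq\; k q^{\ast} \sigma + (r^{\ast} + s^{\ast}) \sigma + t^{\ast} \bigl(\sigma_{k+1}(x,T) + (k-1)\sigma\bigr).
\]
Isolating $\sigma_{k+1}(x,T)$, dividing by $t^{\ast}$, and invoking $1 - q^{\ast} - r^{\ast} - s^{\ast} \geq 2 t^{\ast}$ produces $\sigma_{k+1}(x,T) \geq (k+1)\sigma$; the reverse inequality is (\ref{eq:ineq1}) with $k+1$ in place of $k$, so equality holds and the induction closes.

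The main obstacle will be the subsequence step: the firmness coefficients may depend on $m$ in an uncontrolled way, and one has to verify that both the uniform positivity (\ref{eq:A}) of $t$ and the sum constraint (\ref{eq:B}) persist through the limit, since $t^{\ast} > 0$ is precisely what legitimates dividing by $t^{\ast}$ and $1 - q^{\ast} - r^{\ast} - s^{\ast} \geq 2 t^{\ast}$ is exactly what makes the final algebra go through. Once this limiting argument is in place, every remaining step reduces to elementary manipulation of the firmness constraint.
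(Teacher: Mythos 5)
Your proposal is correct and follows essentially the same route as the paper: the same induction on $k$, applying (\ref{eq:C}) at the pairs $(T^{m}x,T^{m+k}x)$ and combining with (\ref{eq:ineq1}), then closing the chain via $\sigma_{k}(x,T)\leq\delta(x,T^{k}x)$, (\ref{eq:ineq2}), and $\overline{\rho}(T)\geq\rho(T)$. The only difference is technical and harmless: you extract a convergent subsequence of the bounded coefficients $(q,r,s,t)$ and pass to the limit in (\ref{eq:C}), whereas the paper avoids any compactness argument by using the uniform bounds from (\ref{eq:A}) and (\ref{eq:B}) (the quantities $A_{n,k}\geq 2k$ and $B_{n,k}\leq(1+2k)/\alpha$) in a direct $\epsilon$--$N$ estimate.
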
	

\begin{proof}
Let $x$ be a point in $X$. First, we show by induction that for every integer $k\geq 1$, 
\[ 
	\frac {\sigma_{k}(x,T)} {k} = \sigma_{1}(x,T).
\]

Let $k$ be a positive integer. Assume that the inductive hypothesis is true, that is,	
\[
	\sigma_{j}(x,T)=j\sigma_{1}(x,T),
\]
for every positive integer $j \leq k$. Let $\epsilon$ be a positive real number. It follows 
from (\ref{eq:sigmak}) and the inductive hypothesis that there exists a positive integer $N$ so that 
for every $n\geq N$ and every $j \leq k$, 
\begin{equation}\label{eq:keyineq}
	j(\sigma_{1}(x,T) - \epsilon) \leq \delta(T^{n}x,T^{n+j}x) \leq j(\sigma_{1}(x,T) + \epsilon).
\end{equation}
	
Since $T$ is firm, there exist non-negative functions $q,r,s,t$ on $X \times X$ satisfying the 
properties (\ref{eq:A}), (\ref{eq:B}) and (\ref{eq:C}). The following notation will be helpful:
\begin{align*}
	q_{a}^{b}:=q(T^{a}x,T^{b}x), & \quad & r_{a}^{b}:=r(T^{a}x,T^{b}x), & \quad & s_{a}^{b}:=s(T^{a}x,T^{b}x), \\
	t_{a}^{b}:=t(T^{a}x,T^{b}x),  & \quad & \delta_{a}^{b}:=\delta(T^{a}x,T^{b}x),
\end{align*}
for all positive integers $a,b$. Now, the property (\ref{eq:C}) implies that
\begin{equation}\label{eq:qrst1}
	\delta_{n}^{n+k+1} \geq \frac{1}{t_{n}^{n+k}}\delta_{n+1}^{n+k+1}
					- \frac{q_{n}^{n+k}}{t_{n}^{n+k}}\delta_{n}^{n+k}
					- \frac{r_{n}^{n+k}}{t_{n}^{n+k}}\delta_{n}^{n+1}
					- \frac{s_{n}^{n+k}}{t_{n}^{n+k}}\delta_{n+k}^{n+k+1}
					- \delta_{n+1}^{n+k}.
\end{equation}

By applying (\ref{eq:keyineq}) in (\ref{eq:qrst1}), it follows that for every $n\geq N$,
\begin{equation*}
\begin{split}
	\delta_{n}^{n+k+1} & \geq  \frac{k}{t_{n}^{n+k}}(\sigma_{1}(x,T) - \epsilon)
					- \frac{q_{n}^{n+k}}{t_{n}^{n+k}}k(\sigma_{1}(x,T) + \epsilon) 
				 - \frac{r_{n}^{n+k}}{t_{n}^{n+k}}(\sigma_{1}(x,T) + \epsilon)  \\
				& \qquad - \frac{s_{n}^{n+k}}{t_{n}^{n+k}}(\sigma_{1}(x,T) + \epsilon)
					- (k-1)(\sigma_{1}(x,T) + \epsilon), 
\end{split}					
\end{equation*}
or equivalently,
\begin{equation}\label{eq:bound}
	\delta_{n}^{n+k+1} 
				 \geq  \left( A_{n,k} - k + 1 \right)\sigma_{1}(x,T) 
				  - \left( B_{n,k} + k - 1 \right)\epsilon,
\end{equation}
where 
\[ A_{n,k} := \frac{k - kq_{n}^{n+k} - r_{n}^{n+k} - s_{n}^{n+k}}{t_{n}^{n+k}} \]
and
\[ B_{n,k} := \frac{k + kq_{n}^{n+k} + r_{n}^{n+k} + s_{n}^{n+k}}{t_{n}^{n+k}}. \]

On the other hand, if we let $\alpha$ denote the positive real number given in (\ref{eq:A}), then the 
property (\ref{eq:B}) implies that
\begin{equation}\label{eq:Bnkbound}
	B_{n,k} \leq \frac{1+2k}{\alpha}
\end{equation}
and also
\begin{equation}\label{eq:Ankbound}
	A_{n,k} \geq 2k.
\end{equation}
	
By applying the inequalities (\ref{eq:Bnkbound}) and (\ref{eq:Ankbound}) in (\ref{eq:bound}), we obtain for every $n\geq N$, 
\[
	\delta_{n}^{n+k+1} \geq (k+1)\sigma_{1}(x,T) - \left[\frac{1+2k}{\alpha} + k - 1\right]\epsilon.
\]
Taking the limit first as $n$ tends to infinity, and then as $\epsilon$ tends to zero, we get
\[
	\sigma_{k+1}(x,T) \geq (k+1)\sigma_{1}(x,T).
\]
This inequality together with (\ref{eq:ineq1}) show our first claim.
		
To complete the proof of our theorem, we need only show that $\sigma_{1}(x,T)$ and $\rho(T)$ are
equal. Indeed, since $T$ is non-expansive, we have that, for every positive integer $m$,
\[
	\sigma_{m}(x,T) \leq \delta(x,T^{m}x).
\]
So, by our first claim, $\sigma_{1}(x,T) \leq \delta(x,T^{m}x)/m$. By letting $m$ tend to infinity, we obtain
\[
	\sigma_{1}(x,T) \leq \rho(T).
\]
This inequality is finally combined with (\ref{eq:ineq2}) and the inequality $\overline{\rho}(T) \geq \rho(T)$ 
to complete the proof.  
\end{proof}

\section{Metric functionals}\label{sec:5}

Let $(X,\delta)$ be a metric space and let $x_0$ be an arbitrary base-point in $X$. We consider the mapping
\[
	\begin{split}
		\Phi : X &\to \mathbb{R}^{X} \\
		w &\mapsto h_w(\cdot) := \delta(\cdot,w) - \delta(x_{0},w).
	\end{split}
\]
If we endow the target space $\mathbb{R}^{X}$ with the topology of point-wise convergence, then the mapping 
$\Phi$ is a continuous injection. Moreover, the closure $\overline{\Phi(X)}$ is compact and consists of mappings 
$h : X \to \mathbb{R}$ vanishing at $x_0$ and satisfying $| h(x)-h(y) | \leq \delta(x,y)$, for every $x,y\in X$. 
Each element of the compact space $\overline{\Phi(X)}$ is called a \emph{metric functional}. Metric functionals 
and the related notion of horofunctions are discussed in \cite{G2019,GK2020,K2021,K2019}. These objects have 
been studied in spaces such as Hilbert and Thompson geometries \cite{W2008,LW2011,W2018}, Teichm\"{u}ller 
geometry \cite{W2019}, and normed spaces \cite{W2007,JS2017,G2019-0,G2019,G2020}.

Among other applications, metric functionals are useful for understanding the behaviour of the iterates of 
non-expansive mappings. Karlsson~\cite{K2001} proved the following result, which he calls the 
Metric Spectral Principle~\cite{K2021-pnas}.

\begin{thm}\label{thm:karlsson}
Let $T$ be a non-expansive mapping of a metric space $(X,\delta)$ into itself. 
Then, there exists a metric functional $h$ such that 
\[ h(T^{n}(x_{0})) \leq - \rho(T)n \] 
for every $n\geq 1$. Moreover, for every $x\in X$, 
\[ \lim_{n\to\infty} -\frac{1}{n}h(T^{n}x) = \rho(T). \]
\end{thm}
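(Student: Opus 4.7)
The plan is to exploit the subadditivity of the sequence $a_n := \delta(x_0, T^n x_0)$ together with the compactness of $\overline{\Phi(X)}$. First I would verify that non-expansiveness gives $\delta(T^n x_0, T^{n+m} x_0) \leq \delta(x_0, T^m x_0) = a_m$, so $a_{n+m} \leq a_n + a_m$. Fekete's lemma then yields $\rho(T) = \lim a_n/n = \inf a_n/n$, and in particular $a_n \geq \rho(T)\, n$ for every $n \geq 1$.

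The heart of the argument is a selection lemma that I would prove first: for every $\epsilon > 0$ there exist infinitely many indices $N$ such that $a_N - a_m \geq (\rho(T) - \epsilon)(N - m)$ for all $0 \leq m < N$. Suppose not; then for all sufficiently large $N$ one may choose some $m(N) < N$ violating the inequality. Iterate the assignment $N \mapsto m(N)$ until the index first drops below a fixed threshold $N_0$; since each step strictly decreases the index, this terminates in finitely many steps. Telescoping the violating inequalities along the descent gives $a_N < C + (\rho(T) - \epsilon) N$ for all large $N$, where $C = \max_{k \leq N_0} a_k$, contradicting $\rho(T) = \lim a_N/N$.

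Next I would select a sequence $\epsilon_k \downarrow 0$ and, for each $k$, an index $N_k$ satisfying the lemma with $\epsilon_k$, arranged so that $N_k \to \infty$. Using compactness of $\overline{\Phi(X)}$, I would pass to a subsequence along which $h_{N_k}(\cdot) := \delta(\cdot, T^{N_k} x_0) - a_{N_k}$ converges pointwise to a metric functional $h$. For a fixed $n \geq 1$ and $N_k > n$, non-expansiveness gives $\delta(T^n x_0, T^{N_k} x_0) \leq a_{N_k - n}$, so the selection lemma yields $h_{N_k}(T^n x_0) \leq a_{N_k - n} - a_{N_k} \leq -(\rho(T) - \epsilon_k)\, n$, and in the limit $h(T^n x_0) \leq -\rho(T)\, n$.

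For the \emph{moreover} clause, the property $|h(x) - h(y)| \leq \delta(x,y)$ gives $|h(T^n x) - h(T^n x_0)| \leq \delta(x, x_0)$, so it suffices to control the orbit of $x_0$. The lower bound $-h(T^n x_0)/n \geq \rho(T)$ is immediate from the first part. For the matching upper bound, the reverse triangle inequality gives $\delta(T^n x_0, T^{N_k} x_0) \geq a_{N_k} - a_n$, hence $h_{N_k}(T^n x_0) \geq -a_n$, and passing to the limit $h(T^n x_0) \geq -a_n$, so $-h(T^n x_0)/n \leq a_n/n \to \rho(T)$. The main obstacle is the selection lemma, whose descent-with-threshold argument is the real content; the remainder is a standard compactness-plus-Lipschitz argument.
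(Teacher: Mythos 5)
This theorem is not proved in the paper at all: it is quoted from Karlsson \cite{K2001} (the ``Metric Spectral Principle'' of \cite{K2021-pnas}). What you have written is essentially a reconstruction of Karlsson's own argument: the subadditivity of $a_n=\delta(x_0,T^nx_0)$ and Fekete, the selection lemma producing infinitely many times $N$ with $a_N-a_m\ge(\rho(T)-\epsilon)(N-m)$ for all $0\le m<N$ (your descent-with-threshold proof is correct, up to a harmless adjustment of the constant $C$ in the case $\rho(T)<\epsilon$, where the telescoped term $(\rho(T)-\epsilon)(N-n_j)$ contributes an extra additive constant bounded by $\epsilon N_0$), the estimate $h_{N_k}(T^nx_0)=\delta(T^nx_0,T^{N_k}x_0)-a_{N_k}\le a_{N_k-n}-a_{N_k}\le-(\rho(T)-\epsilon_k)n$, the lower bound $h_{N_k}(T^nx_0)\ge -a_n$, and the transfer from $x_0$ to an arbitrary $x$ via $|h(T^nx)-h(T^nx_0)|\le\delta(x,x_0)$ are all correct and give both assertions of the theorem.

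The one genuine gap is the step ``using compactness of $\overline{\Phi(X)}$, I would pass to a subsequence along which $h_{N_k}$ converges pointwise to a metric functional $h$.'' In the product topology on $\mathbb{R}^{X}$ the compact set $\overline{\Phi(X)}$ is in general \emph{not} sequentially compact --- the paper itself makes exactly this point in its proof of Corollary~\ref{cor:2} --- so a convergent subsequence is only guaranteed when $X$ is separable. The repair is standard: by compactness the sequence $(\Phi(T^{N_k}x_0))_k$ has a cluster point $h\in\overline{\Phi(X)}$. Fix $n\ge1$, $\eta>0$ and $k_0$ with $N_{k_0}\ge n$; since $h$ is a cluster point there is $k\ge k_0$ with $|h(T^nx_0)-h_{N_k}(T^nx_0)|<\eta$, and since $\epsilon_k\le\epsilon_{k_0}$ this yields $h(T^nx_0)\le-(\rho(T)-\epsilon_{k_0})n+\eta$ and $h(T^nx_0)\ge-a_n-\eta$; letting $k_0\to\infty$ and $\eta\to0$ gives precisely the two inequalities your argument needs, because they are closed, pointwise conditions at the countably many points $T^nx_0$. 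Alternatively, you can imitate the device the authors use for Corollary~\ref{cor:2}: take $h$ in the intersection of the nested non-empty compact sets $\overline{\{\Phi(T^{N_j}x_0):j\ge k\}}$, $k\ge1$. With this repair your proof is complete and coincides in substance with the cited one.
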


Complementing this, Gaubert and Vigeral~\cite{GV2012} proved the following.

\begin{thm}\label{thm:gaubert_vigeral}
If $(X,\delta)$ is a Banach space or, more generally, a complete metrically 
star-shaped space, then $\rho(T)=\overline{\rho}(T)$ and there exists a metric 
functional $h$ such that for every $x\in X$,
\[ h(Tx) \leq h(x)-\rho(T). \] 
\end{thm}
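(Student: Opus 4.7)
The plan is to prove Theorem~\ref{thm:gaubert_vigeral} in two stages: first, establish the displacement equality $\overline{\rho}(T)=\rho(T)$ in the spirit of Kohlberg--Neyman by producing approximate fixed points; second, construct the required metric functional as a cluster point of horofunctions based at these approximate fixed points.

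For the equality, since $\overline{\rho}(T)\ge\rho(T)$ is already recorded in the section, it suffices to exhibit near-minimizers of $\delta(\cdot,T\cdot)$ with displacement approaching $\rho(T)$. In a Banach space, fix $v\in X$ and for each $t\in(0,1)$ consider $T_tx:=(1-t)v+tTx$, which is $t$-Lipschitz and so has a unique fixed point $x_t$ by Banach's contraction principle; the defining equation yields $Tx_t-x_t=\frac{1-t}{t}(x_t-v)$. A standard estimate using the $t$-contraction property of $T_t$, comparing $x_t$ with any near-minimizer $w$ of $\delta(\cdot,T\cdot)$, shows $\delta(x_t,Tx_t)\to\overline{\rho}(T)$ as $t\uparrow1$. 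Combined with $\delta(x_t,Tx_t)\ge\rho(T)$ from non-expansiveness, this forces $\overline{\rho}(T)=\rho(T)$. In the metrically star-shaped case one replaces the convex combination by the geodesic retraction toward the star-center and runs the same argument.

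For the metric functional, assume $T$ is fixed-point-free (otherwise $h\equiv0$ works) so that $\delta(v,x_t)\to\infty$ as $t\uparrow1$. The horofunctions $h_t(x):=\delta(x,x_t)-\delta(x_0,x_t)$ lie in the compact space $\overline{\Phi(X)}$ of this section, and a pointwise cluster point $h$ of $h_t$, along a subnet in which $x_t$ also settles into a limiting direction of escape, is the Busemann-type metric functional associated with that direction. The inequality $h(Tx)\le h(x)-\rho(T)$ then expresses the fact that $T$ must move each point by at least $\rho(T)$ in the escape direction.

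The main obstacle is this final inequality. The naive triangle bound $\delta(Tx,x_t)\le\delta(x,x_t)+\delta(x_t,Tx_t)$ only yields $h_t(Tx)-h_t(x)\le\delta(x_t,Tx_t)$, which tends to $\rho(T)>0$ and so has the wrong sign. To flip it one combines non-expansiveness $\delta(Tx,Tx_t)\le\delta(x,x_t)$ with the identity $Tx_t=x_t+\frac{1-t}{t}(x_t-v)$; in a Hilbert space, squaring and cancelling produces a lower bound $\langle x_t-v,Tx-x\rangle\ge\frac{1-t}{t}\|x_t-v\|^2+\mathrm{o}(\|x_t-v\|)$, and dividing by $\|x_t-v\|$ and passing to the limit gives the desired decay. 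In a general Banach space one uses a support functional at $x_t-v$ in place of the inner product, and in metrically star-shaped spaces one argues via the star geometry. This second-order use of non-expansiveness, which leverages the vector or star-shape structure of the ambient space, is precisely where the hypothesis enters in an essential way.
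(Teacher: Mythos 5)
Your overall plan---retract $T$ toward a base point to obtain a strict contraction $T_t$, take its fixed points $x_t$, and extract a cluster point of the functionals $\delta(\cdot,x_t)-\delta(x_0,x_t)$ in $\overline{\Phi(X)}$---is exactly the construction the paper attributes to Gaubert and Vigeral (the theorem is quoted from \cite{GV2012}; the paper gives no proof of its own). However, two steps of your write-up have genuine gaps. First, your deduction of $\overline{\rho}(T)=\rho(T)$ is a non sequitur: from $\delta(x_t,Tx_t)\to\overline{\rho}(T)$ and $\delta(x_t,Tx_t)\geq\rho(T)$ you can only conclude $\overline{\rho}(T)\geq\rho(T)$, which is the trivial inequality already recorded in Section~4; the substantive direction $\overline{\rho}(T)\leq\rho(T)$ is never addressed. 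In the standard argument this direction comes \emph{from} the functional inequality: once $h(Tx)\leq h(x)-\overline{\rho}(T)$ is established, iterating and using that $h$ is $1$-Lipschitz gives $n\,\overline{\rho}(T)\leq h(x)-h(T^{n}x)\leq\delta(x,T^{n}x)$, hence $\overline{\rho}(T)\leq\rho(T)$. So your two stages are in the wrong logical order, and as written the equality is unproved.

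Second, the central inequality $h(Tx)\leq h(x)-\rho(T)$ is only sketched, and the sketch is off-target. The Hilbert-space bound is asserted rather than derived, and in a general Banach space a support functional at $x_t-v$ yields a statement about a \emph{linear} functional (in the spirit of \cite{KN1981}), not about the metric functional $h$ you defined as a cluster point of $\delta(\cdot,x_t)-\delta(x_0,x_t)$; the passage from one to the other is missing, and ``one argues via the star geometry'' is not an argument. What actually closes the gap, uniformly in the Banach and metrically star-shaped cases, is the elementary estimate coming from the fixed-point relation (namely that $x_t$ sits at parameter $t$ on the distinguished geodesic from $v$ to $Tx_t$) combined with non-expansiveness and the contraction property of the retraction: $\delta(Tx,x_t)\leq t\,\delta(x,x_t)+(1-t)\,\delta(v,Tx)$, which gives $h_t(Tx)-h_t(x)\leq -(1-t)\,\delta(x,x_t)+(1-t)\,\delta(v,Tx)\leq -t\,\delta(x_t,Tx_t)+(1-t)\bigl(\delta(v,x)+\delta(v,Tx)\bigr)$, and letting $t\uparrow 1$ along the subnet gives $h(Tx)\leq h(x)-\overline{\rho}(T)$. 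No inner products, support functionals, or escape to infinity are needed. Your reduction to the fixed-point-free case is also shaky: $h\equiv 0$ need not be a metric functional (if $T$ has a fixed point $w^{*}$, take $h=\Phi(w^{*})$ instead), and fixed-point-freeness does not force $\delta(v,x_t)\to\infty$ in infinite dimensions; one simply takes a cluster point of the net $(h_t)$ in the compact set $\overline{\Phi(X)}$ in all cases.
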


It should be noted that the metric functional appearing in Theorem~\ref{thm:karlsson} is a limit point of the 
orbit $(T^{n}x_{0})_{n\geq 0}$ in the compact space $\overline{\Phi(X)}$. On the other hand, in 
Theorem~\ref{thm:gaubert_vigeral}, the metric functional is constructed by composing $T$ with a retraction 
along geodesics to get a strict contraction, having of course a fixed point, and then taking a limit point of the 
sequence of fixed points as the retraction approaches the identity.

\begin{proof}[Proof of Corollary~\ref{cor:2}]
Recall that $\overline{\Phi(X)}$ is a compact space. This space is sequentially compact 
whenever $X$ is separable. In this case, there is a subsequence $n_{i}$ such that $\Phi(T^{n_{i}}x_{0})$ 
converges to some $h \in \overline{\Phi(X)}$ as $i\to\infty$. Since $T$ is a firm non-expansive 
mapping and $\delta$ is a metric, it follows from Theorem~\ref{th:1} that for every $x\in X$,
\begin{equation*}
\begin{split}
	h(Tx) & =  \lim_{i\rightarrow\infty}\big[\delta(Tx,T^{n_{i}}x_{0}) - \delta(x_{0},T^{n_{i}}x_{0}) \big]\\
		& \leq \liminf_{i\rightarrow\infty}\big[\delta(Tx,T^{n_{i}+1}x_{0}) 
				+ \delta(T^{n_{i}+1}x_{0},T^{n_{i}}x_{0}) 
				- \delta(x_{0},T^{n_{i}}x_{0}) \big]\\
		& \leq \liminf_{i\rightarrow\infty}\big[\delta(x,T^{n_{i}}x_{0})  
				- \delta(x_{0},T^{n_{i}}x_{0}) 
				+ \delta(T^{n_{i}}x_{0},T^{n_{i}+1}x_{0}) \big] \\
		& =  h(x) + \rho(T).
\end{split}
\end{equation*}
Our claim follows immediately from the assumption $\rho(T)=0$. 

In general, $\overline{\Phi(X)}$ is not sequentially compact. However, we can argue as follows. 
For every $n\geq 1$ define the set
\[
	W_{n} = \{ h \in \overline{\Phi(X)} : \text{$h(Tx)\leq h(x)+\delta(T^{n}x_{0},T^{n+1}x_{0})$, for all $x\in X$} \}.
\]
Each $W_{n}$ is non-empty since $\Phi(T^{n}x_{0})$ belongs to $W_{n}$. Moreover, $(W_{n})_{n\geq 1}$ is a 
non-increasing sequence of compact subsets of $\overline{\Phi(X)}$. Therefore, there exists 
$h\in\bigcap_{n\geq 1}W_{n}$ satisfying our claim. 	
\end{proof}	

\begin{proof}[Proof of Corollary~\ref{cor:3}]
We fix here $x_{0}=0\in V$. If the orbit $(T^{n}0)_{n\geq 0}$ is unbounded, it follows from \cite[p.~2390]{L2009} 
that there exists a subsequence $n_{i}$ such that $\Phi(T^{n_{i}}0)$ converges to some $h\in \overline{\Phi(V)}$ 
as $i\to\infty$. Moreover, for every $x\in V$,
\[ \lim_{n\to\infty}h(T^{n}x) = -\infty. \]

Since $T$ is firm non-expansive and $\rho(T)=0$, it follows from Corollary~\ref{cor:2} that 
\[ h(T^{n+1}x) \leq h(T^{n}x), \] 
for every $n\geq 0$. This completes the proof.
\end{proof}
	
\subsection*{Acknowledgment}
The first author acknowledges financial support from the Vilho, Yrj\"{o} and
Kalle V\"{a}is\"{a}l\"{a} Foundation of the Finnish Academy of Science and Letters, 
and from the Otto A. Malm Foundation.
	
%\nocite{*}

% ------------------------------------------------------------------------

% ------------------------------------------------------------------------
\end{document}